\numberwithin{equation}{section}
\theoremstyle{plain}
\newtheorem{theorem}{Theorem}[section]
\newtheorem{proposition}[theorem]{Proposition}
\theoremstyle{definition}
\newtheorem*{example}{Example}
\newtheorem*{problem}{Problem}
\theoremstyle{remark}
\newtheorem*{remark}{Remark}
\newcommand{\SL}{\text {\rm SL}}
\newcommand{\Z}{\mathbb{Z}}
\newcommand{\N}{\mathbb{N}}
\newcommand{\C}{\mathbb{C}}
\def\({\left(}
\def\){\right)}
\newcommand{\ord}{\text {\rm ord}}
\begin{document}

\title[A note on non-ordinary primes] {A note on non-ordinary primes}

\author{Seokho Jin, Wenjun Ma, and Ken Ono}

\address{School of Mathematics, Korea Institute for Advanced Study, Hoegiro 85, Dongdaemun-gu, Seoul 130-722, Korea}

\email{seokhojin@kias.re.kr}

\address{School of Mathematics, Shandong University, Jinan, Shandong, China. 250100}
\email{wenjunma.sdu@hotmail.com}

\address{Department of Mathematics,  Emory University,
Atlanta, GA. 30322}
\email{ono@mathcs.emory.edu}

\thanks{The first author thanks KIAS for its generous support. The second author thanks the China Scholarship Council for its generous support. The third author thanks the NSF and the Asa Griggs Candler Fund for their generous support.}
%%%%%%%%%%%%%%%%%%%%%%%%%%%%%

\begin{abstract} Suppose that $O_L$ is the ring of integers of a number field $L$, and suppose that
$$
f(z)=\sum_{n=1}^{\infty}a_f(n)q^n\in S_k\cap O_L[[q]]
$$
(note: $q:=e^{2\pi i z}$) is a normalized Hecke eigenform for $\SL_2(\Z)$.
We say that $f$ is non-ordinary at a prime $p$ if there is a prime ideal $\frak{p}\subset O_L$ above $p$ for which
$$
a_f(p)\equiv 0\pmod{\frak{p}}.
$$
For any finite set of primes $S$, we prove that there are normalized Hecke eigenforms which are non-ordinary for each $p\in S$. The proof is elementary and follows from a generalization of work of Choie, Kohnen and the third author
\cite{ChoieKohnenOno}.
\end{abstract}
\maketitle

\section{Introduction and Statement of Results}

If $k\geq 4$ is even, then let $M_k$ (resp. $S_k$) denote the finite dimensional $\C$-vector space of weight $k$
holomorphic modular forms (resp. cusp forms) on $\SL_2(\Z)$. Furthermore, let $M_k^{!}$ denote the infinite dimensional
space of weakly holomorphic modular forms of weight $k$ with respect to $\SL_2(\Z)$. Recall that a meromorphic modular form is weakly holomorphic if its poles (if any) are supported at cusps. We shall identify a modular form on
$\SL_2(\Z)$ by its Fourier expansion at infinity
$$
f(z)=\sum_{n\gg -\infty} a_f(n)q^n,
$$
where $q:=e^{2\pi i z}$.

Suppose that $O_L$ is the ring of integers of a number field $L$, and suppose that
$$
f(z)=\sum_{n=1}^{\infty}a_f(n)q^n\in S_k\cap O_L[[q]]
$$
is a normalized Hecke eigenform for $\SL_2(\Z)$.
We say that $f$ is non-ordinary at a prime $p$ if there is a prime ideal $\frak{p}\subset O_L$ above $p$ for which
$$
a_f(p)\equiv 0\pmod{\frak{p}}.
$$
Very little is known about the distribution of non-ordinary primes. We recall the following well-known open problem
(see Gouv\^ea's expository article \cite{Gouvea}).

\begin{problem}
Are there infinitely many non-ordinary primes for a generic normalized Hecke eigenform $f(z)$?
\end{problem}

We do not solve this problem here. It remains open. However, we establish the following related result.

\begin{theorem}\label{MainThm} If $S$ is a finite set of primes, then there are infinitely many normalized Hecke eigenforms
for $\SL_2(\Z)$ which are non-ordinary for each $p\in S$.
\end{theorem}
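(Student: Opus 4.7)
The plan is to prove the theorem by first handling a single prime at a time and then combining the results across the finite set $S$ by an arithmetic-progression argument.

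For a fixed prime $p$, my goal is to exhibit an infinite arithmetic progression $W_p \subset 2\N$ of weights such that every normalized Hecke eigenform $f \in S_k$ with $k \in W_p$ is non-ordinary at $p$; this will be the uniform-in-weight strengthening of the main congruence of \cite{ChoieKohnenOno}. The starting point is the classical duality: if $f = \sum_{n \geq 1} a_f(n) q^n \in S_k$ and $g = \sum_{n \gg -\infty} b_g(n) q^n \in M_{2-k}^!$, then the weight-two product $fg$ lies in $M_2^!$ and hence has vanishing constant term on $\SL_2(\Z)$ (since the only cusp of $X(1)$ is $\infty$, any weight-two weakly holomorphic form corresponds to a meromorphic differential whose single residue must vanish), yielding
\[
\sum_{n \geq 1} a_f(n)\, b_g(-n) \;=\; 0.
\]
Combining the Duke--Jenkins canonical basis of $M_{2-k}^!$ with a careful $p$-adic analysis of the non-negative Fourier coefficients of each basis element, one would construct an element $g_{p,k} \in M_{2-k}^!$ with $O_L$-integral Fourier expansion whose principal part is congruent to a unit multiple of $q^{-p}$ modulo $\mathfrak{p}$. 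The duality above then forces $a_f(p) \equiv 0 \pmod{\mathfrak{p}}$ for every normalized Hecke eigenform $f \in S_k$ and every $k \in W_p$.

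With such $W_p$ in hand for each $p \in S$, I would combine them by the Chinese Remainder Theorem: each $W_p$ is an arithmetic progression whose modulus divides a fixed integer $M_p$, and, arranging the residues compatibly, the intersection $W := \bigcap_{p \in S} W_p$ contains an infinite arithmetic progression. For every $k \in W$ with $\dim S_k \geq 1$ (which is all sufficiently large $k$), every normalized Hecke eigenform $f \in S_k$ is simultaneously non-ordinary at every $p \in S$, producing infinitely many forms of the required type. The main obstacle is the uniform construction of the forms $g_{p,k}$ in the first step: because the duality identity constrains the principal parts of elements of $M_{2-k}^!$ to be orthogonal to $S_k$, one cannot prescribe the principal part freely, and establishing the required $p$-adic structure on the realizable principal parts as $k$ varies in an arithmetic progression is precisely the generalization of \cite{ChoieKohnenOno} that the proof rests on.
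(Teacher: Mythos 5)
Your overall frame (pair $f$ with a weakly holomorphic form of weight $2-k$, use the vanishing of the constant term of a weight $2$ form in $M_2^!$, restrict $k$ to arithmetic progressions, and intersect the progressions over $p\in S$) is the same as the paper's, and the final combination step is exactly the paper's choice $k=j\prod_{p\in S}(p-1)+m$ with $m\in\{4,6,8,10,14\}$ (with $p=2,3$ disposed of by Hatada/Choie--Kohnen--Ono). But the step you yourself flag as the main obstacle is not merely unproven: as stated it cannot be carried out for large $k$. If $g_{p,k}\in M_{2-k}^!$ had $O_L$-integral coefficients and principal part congruent to a unit times $q^{-p}$ modulo $\mathfrak{p}$, then the duality relation $\sum_{n\geq 1}a_f(n)b_g(-n)=0$ would force $a_f(p)\equiv 0\pmod{\mathfrak{p}}$ for \emph{every} cusp form $f\in S_k\cap O_L[[q]]$, not just for eigenforms. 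Once $\dim S_k\geq p$ (which happens for all but finitely many $k$ in any infinite progression), the Miller basis provides a cusp form in $S_k\cap\Z[[q]]$ with $a_f(p)=1$, a contradiction. Non-ordinarity of each eigenform modulo some prime above $p$ in its own coefficient field does not imply such a uniform congruence on the whole integral space, so targeting $a_f(p)$ directly through a principal part supported at $q^{-p}$ is the wrong aim.

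The paper avoids this with two ideas your proposal is missing. First, it does not isolate $a_f(p)$ but $a_f(p^b)$ for $b$ large: starting from an explicit weight $2-m$ form $g_m$ with principal part $q^{-1}$, the form $g_m^{p^b}E_{p-1}^c\in M_{2-k}^!$ is, modulo $p$ (Fermat's little theorem plus $E_{p-1}\equiv 1$), equal to $q^{-p^b}+\text{constant}+O(q^{p^b})$; the enormous gap between the pole order and the first positive-exponent term surviving mod $p$ means the constant term of its product with $f$ involves only $a_f(p^b)$ and $a_f(0)$, giving $a_f(p^b)\equiv 0\pmod p$ for every cusp form of weight $k\equiv m\pmod{p-1}$ with $k-2\leq (m-2)p^b$. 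Second, the descent from $p^b$ to $p$ uses the Hecke recursion $a_f(p^{b})\equiv a_f(p)^{b}\pmod p$, which is where the eigenform hypothesis genuinely enters and why no statement about all integral cusp forms is needed. Without these two ingredients (or a substitute for them), the "careful $p$-adic analysis of the Duke--Jenkins basis" you invoke has no chance of producing the forms you need, so the proposal has a genuine gap at its central step.
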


\begin{remark}
The proof of Theorem~\ref{MainThm} relies on a general theorem about the Fourier coefficients of weakly holomorphic modular forms modulo $p$ (see Theorem~\ref{TechnicalTheorem}). For normalized Hecke eigenforms, this general result
incorporates classical results of Hatada \cite{Hatada} (in the case where $p=2$ and $3$) and Hida \cite{Hida1, Hida2, Hida3} (for primes
$p\geq 5$) on non-ordinary primes.
\end{remark}

\begin{remark}
The proof of Theorem~\ref{MainThm} is constructive. Suppose that $S=\{p_1, p_2,\dots, p_m\}$
is a finite set of primes. Suppose that $k\geq 12$ is an even integer. If for each $p\in S$ there is a choice of $t\in A=\{4, 6, 8, 10, 14\}$ for which $(p-1)|(k-t)$, then every prime in $S$ is non-ordinary for every
normalized Hecke eigenform $f\in S_k$. The earlier work of Choie, Kohnen and the third author \cite{ChoieKohnenOno} is eclipsed by this result thanks to the flexibility in the choice of $t$ above.
\end{remark}

In Section~2 we recall certain facts about modular forms, and we prove Theorem~\ref{TechnicalTheorem}. The proof is elementary. In Section~3 we obtain Theorem~\ref{MainThm} as a simple consequence when $p\geq 5$, combining with the known result on $p=2,3$, and in Section~4 we offer some numerical examples.

\section{Preliminaries}

\subsection{Nuts and Bolts}
As usual, let $\Delta(z)\in S_{12}$ be the cusp form
\begin{equation}\label{DeltaFunction}
\Delta(z):=q\prod_{n=1}^{\infty}(1-q^n)^{24}=q-24q^2+\dots,
\end{equation}
and, for even $k\geq 4$, let $E_k(z)\in M_k$ be the normalized Eisenstein series
\begin{equation}\label{Eisenstein}
E_k(z):=1-\frac{2k}{B_k}\sum_{n=1}^{\infty}\left(\sum_{1\leq d\mid n}d^{k-1}\right)q^n,
\end{equation}
where the rational numbers $B_k$ are the usual Bernoulli numbers given by the generating function
$$
\sum_{k=0}^{\infty}B_k\cdot \frac{t^k}{k!}=\frac{t}{e^t-1}=1-\frac{1}{2}t+\frac{1}{12}t^2-\dots.
$$
For convenience, we let $E_0(z):=1$. Finally, we let $j(z)$ be the usual modular function
\begin{equation}\label{jfunction}
j(z):=\frac{E_4(z)^3}{\Delta(z)}=q^{-1}+744+196884q+\dots.
\end{equation}
Finally, for convenience, if $k\in 2\Z$, then throughout we define $\delta(k)\in \{0, 4, 6, 8, 10, 14\}$
so that
\begin{equation}
\delta(k)\equiv k\pmod{12}.
\end{equation}

In the proof, we need the following propositions.
\begin{proposition}\label{proposition1}
 A normalized Hecke eigenform is non-ordinary at $p$
if there is an $m\geq 1$ such that $a_f(p^m)\equiv 0\pmod{p}$.
\end{proposition}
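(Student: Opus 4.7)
The plan is to use the multiplicative structure of the Fourier coefficients of a normalized Hecke eigenform at prime powers and then reduce the congruence modulo a chosen prime ideal $\mathfrak{p}\subset\OL$ lying over $p$.

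First I would recall the Hecke recursion at a prime: for a normalized Hecke eigenform $f\in S_k$ and any $n\geq 1$,
\begin{equation*}
a_f(p^{n+1}) = a_f(p)\,a_f(p^n) - p^{k-1}\,a_f(p^{n-1}).
\end{equation*}
Fix any prime ideal $\mathfrak{p}\subset\OL$ above $p$. Since $p\in\mathfrak{p}$ and $k-1\geq 3$, reducing the recursion modulo $\mathfrak{p}$ kills the second term, giving $a_f(p^{n+1})\equiv a_f(p)\,a_f(p^n)\pmod{\mathfrak{p}}$. A straightforward induction on $n$ then yields
\begin{equation*}
a_f(p^n)\equiv a_f(p)^n \pmod{\mathfrak{p}} \qquad \text{for all } n\geq 1.
\end{equation*}

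Now suppose $a_f(p^m)\equiv 0\pmod{p}$ for some $m\geq 1$, which means $a_f(p^m)\in p\OL$. Because $p\OL\subseteq\mathfrak{p}$ for every prime ideal $\mathfrak{p}$ of $\OL$ above $p$, we obtain $a_f(p^m)\equiv 0\pmod{\mathfrak{p}}$ for any such $\mathfrak{p}$. Combined with the congruence from the previous step this gives $a_f(p)^m\equiv 0\pmod{\mathfrak{p}}$, and since $\mathfrak{p}$ is prime we conclude $a_f(p)\equiv 0\pmod{\mathfrak{p}}$. Hence $f$ is non-ordinary at $p$.

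There is really no obstacle here: the only subtlety is being careful that the hypothesis $a_f(p^m)\equiv 0\pmod p$ (an integral ideal congruence in $\OL$) does imply the same congruence modulo any prime $\mathfrak{p}\mid p$, so that the Hecke recursion can be applied coefficient-wise in the residue field $\OL/\mathfrak{p}$.
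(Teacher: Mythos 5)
Your proof is correct and follows essentially the same route as the paper: the Hecke recursion $a_f(p^{n+1})=a_f(p)a_f(p^n)-p^{k-1}a_f(p^{n-1})$ reduced modulo a prime above $p$, followed by induction to get $a_f(p^n)\equiv a_f(p)^n$. The only difference is cosmetic: you work modulo $\mathfrak{p}$ throughout and spell out the final step (that $a_f(p)^m\in\mathfrak{p}$ forces $a_f(p)\in\mathfrak{p}$ by primality), which the paper's proof leaves implicit after establishing the congruence modulo $p$.
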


\begin{proof}
This follows from the fact that $T_p f(z)=a_f(p)f(z)$ for every prime $p$ when $f(z)$ is a normalized Hecke eigenform of weight $k$. Here $T_p$ is the $p$-th Hecke operator.
In particular, on prime power exponents, we have
\begin{equation*}
a_f(p)a_f(p^m)=a_f(p^{m+1})+p^{k-1}a_f(p^{m-1})\equiv a_f(p^{m+1}) \ \ ({\rm{mod}}\ p)
\end{equation*}
for every non-negative integer $n$.
By induction, we find that
\begin{equation*}
a_f(p^m)\equiv a_f(p)^m\ \ ({\rm{mod}}\ p).
\end{equation*}
This proves the proposition.
\end{proof}

The following well-known propositions play a central role in the proof of Theorem~\ref{TechnicalTheorem}.

\begin{proposition}\label{proposition2}
If $p\geq 5$ is prime, then as a $q$-series, $E_{p-1}(z)\equiv 1\pmod p$.
\end{proposition}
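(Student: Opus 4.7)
The plan is to unpack the closed formula
\[
E_{p-1}(z) = 1 - \frac{2(p-1)}{B_{p-1}}\sum_{n=1}^{\infty}\sigma_{p-2}(n)\, q^{n}
\]
coming from (\ref{Eisenstein}) and to show that the rational number $-\frac{2(p-1)}{B_{p-1}}$ lies in $p\Z_{(p)}$. Since the divisor sums $\sigma_{p-2}(n)=\sum_{d\mid n}d^{p-2}$ are rational integers, this will force every non-constant Fourier coefficient of $E_{p-1}(z)$ to vanish modulo $p$.

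The key input is the von Staudt--Clausen theorem, which asserts that for every even integer $k\geq 2$
\[
B_k + \sum_{\substack{q\text{ prime}\\ (q-1)\mid k}}\frac{1}{q}\in\Z.
\]
Applied to $k=p-1$, the sum on the left includes the term $1/p$ because $(p-1)\mid(p-1)$, while all other primes $q$ occurring satisfy $q\neq p$. First I would rewrite this as
\[
pB_{p-1} \;=\; -1 \;+\; p\Bigl(N - \sum_{\substack{q\ne p\\ (q-1)\mid (p-1)}}\tfrac{1}{q}\Bigr),
\]
where the quantity in parentheses lies in $\Z_{(p)}$, hence $pB_{p-1}\equiv -1\pmod{p\Z_{(p)}}$. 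In particular $B_{p-1}$ has $p$-adic valuation exactly $-1$, and $pB_{p-1}$ is a unit in $\Z_{(p)}$.

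Next I would compute the Eisenstein coefficient $p$-adically:
\[
-\frac{2(p-1)}{B_{p-1}} \;=\; -\frac{2(p-1)\cdot p}{pB_{p-1}}.
\]
The numerator carries an explicit factor of $p$, while the denominator $pB_{p-1}$ is a unit mod $p$ (congruent to $-1$). Hence $-\frac{2(p-1)}{B_{p-1}}\in p\Z_{(p)}$, i.e.\ it is $\equiv 0\pmod p$. Since $\sigma_{p-2}(n)\in\Z$, multiplying through shows every coefficient of $q^n$ ($n\geq 1$) in $E_{p-1}(z)$ is in $p\Z_{(p)}\cap\Q=p\Z$, and therefore $E_{p-1}(z)\equiv 1\pmod p$ as a $q$-series.

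There is no real obstacle here; the only subtle point is the bookkeeping that ensures $B_{p-1}$ has $p$ in its denominator with residue precisely $-1/p$, so that clearing the denominator produces the needed factor of $p$ in the numerator. The hypothesis $p\geq 5$ is used implicitly so that $p-1\geq 4$ and the Eisenstein series $E_{p-1}$ actually lives in $M_{p-1}$ (ruling out the anomalous weight $2$ case).
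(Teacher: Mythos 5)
Your argument is the standard von Staudt--Clausen proof of this congruence and is essentially correct; the paper itself does not reprove the proposition but simply cites page~38 of \cite{OnoCBMS}, where the same computation appears. Your key steps are fine: for $k=p-1$ the von Staudt--Clausen theorem puts $1/p$ (and only primes $q\neq p$ otherwise) in the denominator, so $pB_{p-1}\equiv -1 \pmod{p\Z_{(p)}}$, hence $-2(p-1)/B_{p-1}=-2(p-1)p/(pB_{p-1})\in p\Z_{(p)}$, and every coefficient of $q^n$ with $n\geq 1$ is divisible by $p$.

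One correction to your final sentence: $p\Z_{(p)}\cap\Q=p\Z_{(p)}$, not $p\Z$, and in fact the coefficients of $E_{p-1}$ need not be rational integers at all. For $p=13$ one has $B_{12}=-691/2730$, so
\begin{equation*}
-\frac{2\cdot 12}{B_{12}}=\frac{65520}{691}\notin\Z .
\end{equation*}
What your argument actually proves, and what the proposition means (and all the paper needs), is that the Fourier coefficients of $E_{p-1}$ are $p$-integral rational numbers whose non-constant terms lie in $p\Z_{(p)}$; the congruence $E_{p-1}\equiv 1\pmod p$ is a congruence of $p$-integral rationals (equivalently, in $\Z_p$). With that rephrasing of the last step your proof is complete.
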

\begin{proof}
This can be found on page 38 of \cite{OnoCBMS}.
\end{proof}
\begin{proposition}\label{proposition3} If $f(z)=\sum_{n\gg -\infty}a_f(n)q^n\in M_2^{!}$, then $a_f(0)=0$.
\end{proposition}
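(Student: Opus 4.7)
The approach is to recognize the weight-$2$ transformation law as the $\SL_2(\Z)$-invariance of the $1$-form $\omega_f := f(z)\,dz$, and then apply the residue theorem on the compactified modular curve $X(1)$. For any $\gamma = \kabcd \in \SL_2(\Z)$ we have $d(\gamma z) = (cz+d)^{-2}\,dz$, so combined with $f(\gamma z) = (cz+d)^2 f(z)$ this gives $f(\gamma z)\,d(\gamma z) = f(z)\,dz$. Hence $\omega_f$ descends to a meromorphic $1$-form on $X(1) \cong \PP^1(\C)$. Since $f$ is holomorphic on $\mathcal{H}$, the only candidate poles of $\omega_f$ on $X(1)$ are the cusp $\infty$ (where $f$ is merely weakly holomorphic) and possibly the images of the two elliptic points $i$ and $\rho$, where the quotient map $\mathcal{H}\to Y(1)$ is ramified.

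At the elliptic points one checks that $\omega_f$ is actually holomorphic. Evaluating $f(-1/z) = z^2 f(z)$ at $z=i$ gives $f(i) = -f(i)$, so $f(i)=0$. A parallel argument, obtained by applying the weight-$2$ transformation law to a generator of the order-$3$ stabilizer of $\rho$ and then differentiating once in $z$, yields $f(\rho) = f'(\rho) = 0$. These vanishing orders are precisely what is needed to cancel the ramification of orders $2$ and $3$ when one rewrites $f(z)\,dz$ in a local uniformizer at the image of $i$ or $\rho$, so $\omega_f$ extends holomorphically across both elliptic points. Thus $\omega_f$ has at most a single pole on $X(1)$, situated at the cusp $\infty$.

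Writing $\omega_f$ in the cuspidal uniformizer $q = e^{2\pi i z}$ via $dz = (2\pi i\, q)^{-1}\,dq$, we get
\begin{equation*}
\omega_f \;=\; \frac{1}{2\pi i}\sum_{n\gg -\infty} a_f(n)\, q^{n-1}\,dq,
\end{equation*}
so $\res_{q=0}\omega_f = a_f(0)/(2\pi i)$. The residue theorem on the compact Riemann surface $X(1)$ forces this sole residue to vanish, giving $a_f(0)=0$. The only nontrivial ingredient in the argument is the elliptic-point verification in the second step; once that is in hand, everything else is a routine invariance check and a direct reading of the residue at $\infty$.
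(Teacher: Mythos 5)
Your argument is correct, but it proceeds by a genuinely different route than the paper. The paper's proof is purely algebraic/$q$-expansion based: it invokes the level-one structure theorem to write any $h\in M_2^!$ as $P(j)E_{14}/\Delta$ with $P$ a polynomial, notes that $E_{14}/\Delta=-\frac{1}{2\pi i}\frac{d}{dz}j$ and $j^w\frac{d}{dz}j=\frac{1}{w+1}\frac{d}{dz}j^{w+1}$, and concludes that $h$ is the $z$-derivative of a polynomial in $j$, hence has vanishing constant term. You instead interpret $f(z)\,dz$ as an $\SL_2(\Z)$-invariant $1$-form, check holomorphy at the elliptic points (your verifications $f(i)=0$ and $f(\rho)=f'(\rho)=0$ via the transformation law under the stabilizers are exactly the vanishing needed to absorb the ramification of orders $2$ and $3$), and then read off $a_f(0)/(2\pi i)$ as the unique possible residue on $X(1)\cong\PP^1(\C)$, which the residue theorem kills. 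Both proofs are complete; in a sense they are two faces of the same fact, since ``the constant term of a derivative vanishes'' is the residue statement at the cusp made explicit. The paper's version is more elementary and self-contained (no Riemann surface theory), fitting the elementary spirit of the note, but it leans on the explicit description of $M_2^!$ that is special to $\SL_2(\Z)$; your version requires the compactified modular curve and the local analysis at elliptic points, but it is more conceptual and generalizes immediately to other Fuchsian groups, where it yields the analogous statement that the cuspidal constant terms (suitably weighted) sum to zero.
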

\begin{proof}
By a simple generalization of Lemma~2.34 of \cite{OnoCBMS}, it is known that
every weakly-holomorphic modular form $h(z)$ of weight $2$ may be represented as $P(j(z))E_{14}(z){\Delta(z)}^{-1}$, where $P(x)$ is a polynomial of $x$.
Dropping the dependence on $z$ for convenience, we have the following well-known identities
\begin{equation*}
-\frac{1}{2\pi i}\frac{d}{dz}j = \frac{E_{14}}{\Delta},
\end{equation*}
\begin{equation*}
j^w\frac{d}{dz}j = \frac{1}{w+1}\frac{d}{dz}j^{w+1},
\end{equation*}
where $w\in \Z_{\geq 0}$.
Therefore, it follows that $h$ is the derivative of a polynomial in $j$, and so its constant term in the Fourier expansion is zero.
\end{proof}

\begin{remark}
For more standard facts about modular forms the reader may see \cite{OnoCBMS}.
\end{remark}

\subsection{Our main technical result}

In 2005 Choie, Kohnen and the third author proved the following (see Corollary 1.3 of
\cite{ChoieKohnenOno}). This result recovered earlier aforementioned results of Hatada and Hida.

\begin{theorem}\label{OldTheorem}
Let $p$ be a prime, and suppose that $f(z)=\sum_{n=1}^{\infty}a_f(n)q^n\in S_k$ is a normalized Hecke eigenform. Let $L_f$ be the number field generated by the coefficients of $f(z)$, and let $\mathfrak{p}\in O_{L_f}$ be any prime ideal above $p$.

\begin{enumerate}
\item If $p=2, 3$, then
\begin{equation*}
a_f(p)\equiv 0\ \ ({\rm{mod}}\ \mathfrak{p}).
\end{equation*}

\item If $p\geq 5$, $\delta(k)\in\{4, 6, 8, 10, 14\}$ and $k\equiv \delta(k) \pmod{p-1}$, then
\begin{equation*}
a_f(p)\equiv 0\ \ ({\rm{mod}}\ \mathfrak{p}).
\end{equation*}
\end{enumerate}
\end{theorem}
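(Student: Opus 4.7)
The theorem splits naturally by the size of $p$.

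For part (1), the case $p\in\{2,3\}$ is the classical theorem of Hatada. I would cite \cite{Hatada}; the proof there uses explicit $\eta$-product identities for $\Delta$ modulo $2$ and $3$ combined with the structure of the graded ring of level-one modular forms modulo these small primes to conclude that every normalized cuspidal Hecke eigenform $f$ satisfies $a_f(p)\equiv 0\pmod{\mathfrak p}$.

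For part (2), my plan is to force the congruence by exhibiting a weakly holomorphic weight-$2$ modular form whose constant term (i) must vanish by Proposition~\ref{proposition3}, and (ii) can be shown to be a mod-$p$ unit multiple of $a_f(p)$. Set $t:=\delta(k)\in\{4,6,8,10,14\}$ and write $k-t=12N$, so that the hypothesis $k\equiv t\pmod{p-1}$ is equivalent to $(p-1)\mid 12N$. Consider
$$
\Phi(z) \;:=\; \frac{f(z)\,E_{14-t}(z)}{\Delta(z)^{N+1}},
$$
with the convention $E_0:=1$ when $t=14$. A direct weight count gives $\Phi\in M_2^{!}$, and because $f$ is cuspidal, $\Phi$ has a pole of order at most $N$ at infinity and is otherwise holomorphic on $\mathbb H$. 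Proposition~\ref{proposition3} then forces the constant term of $\Phi$ to vanish, yielding a $\Z$-linear relation
$$
\sum_{m=1}^{N+1} a_f(m)\, c(m) \;=\; 0,
$$
whose coefficients $c(m)\in\Z$ can be read off the Fourier expansion of $E_{14-t}(z)/\Delta(z)^{N+1}$.

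The final step, which I expect to be the main technical obstacle, is to reduce this identity modulo $\mathfrak p$ and isolate a nonzero multiple of $a_f(p)$. My strategy is to use Proposition~\ref{proposition2} to replace $\Phi$ by $\Phi\cdot E_{p-1}^{s}$ modulo $p$ for a strategically chosen $s$ (which alters the weight but not the mod-$p$ $q$-expansion), so that after substitution the effective exponent of $\Delta$ is adapted to $p$; then Proposition~\ref{proposition1} lets me replace each $a_f(pm)$ modulo $\mathfrak p$ by $a_f(p)a_f(m)$, collecting every term indexed by a multiple of $p$ into a common factor of $a_f(p)$. What must remain to check is that the coefficients $c(m)\bmod p$ vanish for every $m$ coprime to $p$ in the range $1\le m\le N+1$. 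I expect this to follow from the condition $(p-1)\mid 12N$ via Kummer-type congruences for the Bernoulli numbers appearing in the Fourier coefficients of $E_{14-t}$ and $\Delta^{-(N+1)}$; the precise bookkeeping here is the real content of the argument and is the step I am least certain about a priori.
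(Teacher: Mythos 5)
Your treatment of part (1) by citing \cite{Hatada} is consistent with the paper, which likewise does not reprove the $p=2,3$ case but quotes it (via Corollary 1.3 of \cite{ChoieKohnenOno}). For part (2), however, your plan has a genuine gap, and it is precisely at the step you flag as uncertain. The form $\Phi=fE_{14-t}/\Delta^{N+1}$ only produces a linear relation among $a_f(1),\dots,a_f(N+1)$ with $N=(k-\delta(k))/12$. The hypothesis $(p-1)\mid(k-\delta(k))$ allows $p$ to be far larger than $N+1$ (e.g.\ $k=16$, $t=4$, $N=1$, $p=13$), in which case $a_f(p)$ simply does not occur in your relation, and no amount of bookkeeping can extract it. Multiplying by $E_{p-1}^{s}$ cannot repair this: by Proposition~\ref{proposition2} it leaves the $q$-expansion unchanged modulo $p$, so it changes neither the principal part nor which coefficients of $f$ enter the constant-term identity. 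Moreover, the vanishing you hope for, $c(m)\equiv 0\pmod p$ for $m$ coprime to $p$, is false in general: for $k=16$, $p=13$ one has $E_{10}/\Delta^{2}=q^{-2}-216q^{-1}+\cdots$, and $-216\not\equiv 0\pmod{13}$; the relation just computes $a_f(2)=216$ and says nothing about $a_f(13)$.

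The missing idea, which is the engine of the paper's Theorem~\ref{TechnicalTheorem} (and of the argument in \cite{ChoieKohnenOno}), is the Frobenius $p$-power trick applied to the \emph{weight $2-k$ factor} rather than to a fixed $\Delta$-power: one takes a form $g_m\in M_{2-m}^{!}$ with principal part $q^{-1}$ (built from $j$, $E_4$, $E_6$), raises it to the power $p^{b}$ with $b$ chosen so that $(m-2)p^{b}\geq k-2$, and corrects the weight by $E_{p-1}^{c}$ with $c\geq 0$, which exists exactly because $(p-1)\mid(k-m)$. Modulo $p$ the multinomial theorem makes $g_m^{p^{b}}$ a series in $q^{p^{b}}$, so its principal part is concentrated at $q^{-p^{b}}$ and all intermediate coefficients vanish automatically; Proposition~\ref{proposition3} then gives, for a cusp form, $a_f(p^{b})\equiv 0\pmod p$ in one stroke, with no Kummer-type analysis needed. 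The final point your plan also misses is that one should not aim at $a_f(p)$ directly: the conclusion is reached for $a_f(p^{b})$ with $b$ large, and only then does Proposition~\ref{proposition1} (the eigenform relation $a_f(p^{b})\equiv a_f(p)^{b}\pmod p$, together with primality of $\mathfrak{p}$) convert this into $a_f(p)\equiv 0\pmod{\mathfrak p}$. This detour through $p$-power indices is exactly what removes the tension between the size of $p$ and the size of $k$ that blocks your construction.
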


Here we strengthen this result for primes $p\geq 5$ by extending it to all the $k$ without any condition on $\delta(k)$.

\begin{theorem}\label{TechnicalTheorem} Let $p\geq 5$ be prime, and suppose that
 $f(z)=\sum_{n\gg-\infty}^{\infty}a_f(n)q^n\in M_k^{!}\cap O_L[[q]]$, where $k\in 2\mathbb{Z}$ and ${O}_L$ is the ring of algebraic integers of a number field $L$.

\begin{enumerate}
\item Suppose that $a\geq 0$ and $m\in A=\{4, 6, 8, 10, 14\}$ are integers for which  $$k-2\leq (m-2) p^a.$$ If  ${\rm{ord}}_{\infty}(f)>-p^a$ and $(p-1)|(k-m)$, then for any integer $b\geq a$, we have
\begin{equation*}
a_f(p^b)\equiv -\frac{2m}{B_m}a_f(0)\ \    ({\rm{mod}} \  p).
\end{equation*}

\item  Suppose that $k\leq 2$, $r, s\in\mathbb{Z}_{\geq 0}$ and $t, u\in\mathbb{Z}_{>0}$ are integers for which $$2-k=r(p-1)+sp^t,$$
where  $s\neq 2$. If $\ord_{\infty}(f)>-p^u, u\leq t$, then for any integer $v$ such that $u\leq v\leq t$, we have
\begin{equation*}
a_f(p^v)\equiv a_f(0)\equiv 0\ \    ({\rm{mod}} \  p).
\end{equation*}
\end{enumerate}
\end{theorem}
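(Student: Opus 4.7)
The plan is to use Proposition~\ref{proposition2} to lift the weight of $f$ to $mp^a$ (respectively $2$ in Part~(2)) and then exploit the Frobenius congruence $G(z)^{p^n}\equiv G(p^nz)\pmod p$, valid for any $G\in\Z[[q]]$, to reduce each part of the claim to a filtration statement for mod-$p$ modular forms, where $M_m=\C E_m$ (respectively $M_2=\{0\}$) provides strong control.

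For Part~(1), I would first note that $k-2\leq(m-2)p^a$ combined with $(p-1)\mid(k-m)$ and $p^a\equiv 1\pmod{p-1}$ makes $(mp^a-k)/(p-1)$ a non-negative integer. Thus $F:=f\cdot E_{p-1}^{(mp^a-k)/(p-1)}\in M_{mp^a}^!\cap O_L[[q]]$ is well-defined and satisfies $F\equiv f\pmod p$ as a $q$-series by Proposition~\ref{proposition2}. Next, the form $E_m^{p^a}\in M_{mp^a}\cap\Z[[q]]$ satisfies $E_m^{p^a}\equiv E_m(p^az)\pmod p$, and a direct computation using $\sigma_{m-1}(p^{b-a})\equiv 1\pmod p$ gives that its $q^{p^b}$ coefficient for $b\geq a$ is congruent to $-\frac{2m}{B_m}$ modulo $p$. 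Setting $H:=F-a_f(0)E_m^{p^a}$, the target congruence becomes $a_H(p^b)\equiv 0\pmod p$ for $b\geq a$, with $a_H(0)=0$ and $\ord_\infty(H)>-p^a$. Applying the Atkin operator $U_p$ iteratively, the identity $U_pV_p=\id$ together with the Frobenius congruence yields $U_p^aH\equiv T_p^aF-a_f(0)E_m\pmod p$; the pole bound on $H$ then forces $U_p^aH\pmod p$ to be the $q$-expansion of a mod-$p$ cusp form on $\SL_2(\Z)$ of weight $mp^a$. The final step invokes Serre's theory of mod-$p$ modular forms and uses the weight hypothesis $k-2\leq(m-2)p^a$ to show this cusp form has filtration at most $m$; since $M_m=\C E_m$ and $E_m$ has constant term $1$, a form of filtration at most $m$ with vanishing constant term must be identically zero modulo $p$. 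In particular, the $q^{p^{b-a}}$ coefficient of $U_p^aH$, which equals $a_H(p^b)$, vanishes modulo $p$.

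Part~(2) follows the same strategy, with target weight $2$ in place of $m$. Here I would pick $G_s\in M_s\cap\Z[[q]]$ with constant term $1$ --- one can take $G_s=E_s$ for $s\in\{4,6,8,10,14\}$, or a suitable monomial in $E_4,E_6$ otherwise. The hypothesis $s\neq 2$ is essential, since $M_2=\{0\}$ precludes any such choice when $s=2$. The product $f\cdot E_{p-1}^r\cdot G_s^{p^t}\in M_2^!\cap O_L[[q]]$ has constant term equal to $a_f(0)$, which Proposition~\ref{proposition3} forces to be zero; so $a_f(0)=0$ a fortiori modulo $p$. An entirely parallel Frobenius-and-filtration argument then establishes $a_f(p^v)\equiv 0\pmod p$ for $u\leq v\leq t$.

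The hard part in both arguments is the filtration bound: showing that the relevant mod-$p$ cusp form has filtration at most $m$ (or vanishes identically at weight $2$). This bound is the subtle interplay between the pole hypothesis $\ord_\infty(f)>-p^a$ and the weight hypothesis $k-2\leq(m-2)p^a$ that makes the theorem work, and it is precisely the content that goes beyond the original result in \cite{ChoieKohnenOno}.
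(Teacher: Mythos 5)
Your reduction steps (lifting the weight with $E_{p-1}$, the Frobenius congruence $E_m^{p^a}\equiv E_m(q^{p^a})\pmod p$, the computation $\sigma_{m-1}(p^{b-a})\equiv 1\pmod p$, and the reformulation as $a_H(p^b)\equiv 0\pmod p$) are all sound, but the proof has a genuine hole exactly where you flag ``the hard part'': the filtration bound is asserted, not proved, and the route you sketch toward it does not go through. The standard tools you invoke (Serre/Jochnowitz filtration theory, the bound on $w(g\,|\,U_p)$) apply to mod-$p$ reductions of \emph{holomorphic} forms, whereas your $H$ is weakly holomorphic with a pole of order up to $p^a-1$; the intermediate forms $U_p^iH$ for $0<i<a$ still have nonvanishing polar parts mod $p$, so they are not mod-$p$ modular forms and the $U_p$-filtration estimates cannot be iterated. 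Even the preliminary claim that $U_p^aH\bmod p$ \emph{is} a mod-$p$ cusp form of weight $mp^a$ requires a separate lemma (that a level-one weakly holomorphic form whose polar coefficients all vanish mod $p$ reduces to a genuine mod-$p$ form), which you do not supply. And granting that, no argument is given for why the filtration drops all the way to $m$; note that your claim ($U_p^aH\equiv 0\pmod p$ as a whole $q$-series) is strictly \emph{stronger} than the theorem, which only asserts the vanishing of the coefficients at exponents $p^{b-a}$. A minor further slip: in Part (2) the constant term of $f\cdot E_{p-1}^r G_s^{p^t}$ equals $a_f(0)$ only modulo $p$ (the polar coefficients of $f$ pair with positive-index coefficients of the Eisenstein factor), so you get $a_f(0)\equiv 0\pmod p$, not $a_f(0)=0$.

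For contrast, the paper's proof avoids all of this machinery. For each target exponent $p^b$ it builds an explicit weakly holomorphic form of weight $2-k$, namely $g_m^{p^b}E_{p-1}^c$ with $g_m=jE_6^{(1+i^m)/2}E_4^{-(m+1+3i^m)/4}\in M_{2-m}^!$ and $2-k=c(p-1)-(m-2)p^b$, multiplies it by $f$ to land in weight $2$, and invokes the elementary fact (Proposition~\ref{proposition3}: weight-$2$ weakly holomorphic forms are derivatives of polynomials in $j$, hence have vanishing constant term). The Frobenius congruence then shows this auxiliary form is $\equiv q^{-p^b}+(432-60m-432i^m)+O(q^{p^b})\pmod p$, so the vanishing constant term reads off exactly $a_f(p^b)+\tfrac{2m}{B_m}a_f(0)\equiv 0\pmod p$ --- one coefficient at a time, with no filtration theory. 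If you want to salvage your approach, you would need to prove the lemma that $U_p^aH\bmod p$ is a mod-$p$ form of weight $mp^a$ and then establish the filtration descent; the paper's pairing-with-weight-$2$ argument is the shortcut that makes the whole question elementary.
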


\begin{proof}
The proofs in both cases begin with the construction of suitable weakly-holomorphic modular forms of weight $2-k$. The product of such forms with $f$ have weight 2,
and so Proposition~\ref{proposition3} implies that their constant terms vanish.

For the case $(1)$, first note that $(k-2)-(m-2)p^b\equiv k-m\ ({\rm{mod}} \  {p-1})$.
As we have $(p-1)|(k-m)$ and $k-2\leq (m-2)p^b$,
we may find a non-negative integer $c$ such that
\begin{equation*}
2-k=c(p-1)-(m-2)p^b.
\end{equation*}
Let $g_m$ be the function
\begin{equation*}
g_m:=j\frac{E_6^{(1+i^m)/2}}{E_4^{(m+1+3i^m)/4}}=
\left\{
         \begin{array}{lllll}
         j\frac{E_6}{E_4^2} & {\rm{for}}\ \  m=4 \\
         j\frac{1}{E_4} & {\rm{for}}\ \  m=6 \\
         j\frac{E_6}{E_4^3} & {\rm{for}}\ \  m=8 \\
         j\frac{1}{E_4^2} & {\rm{for}}\ \  m=10 \\
         j\frac{1}{E_4^3} & {\rm{for}}\ \  m=14
         \end{array}
       \right.
\in M_{2-m}^!.       
\end{equation*}
Then we have
\begin{equation*}
g_m^{p^b}E_{p-1}^c\in M_{2-k}^!.
\end{equation*}
That is to say, the constant term of $g_m^{p^b}E_{p-1}^{c}f$ is zero.
From Proposition ~\ref{proposition2} we know that
\begin{equation*}
E_{p-1}\equiv 1\ ({\rm{mod}}\ p).
\end{equation*}
Then we have that constant term of $g_m^{p^b}f$ is zero modulo $p$.
By using Fermat's little theorem to compute the multinomials, we get
\begin{eqnarray*}
\begin{array}{ll}
g_m^{p^b}f
&=(q^{-1}+744+O(q))^{p^b}(1-504q+O(q^2))^{\frac{p^b(1+i^m)}{2}}(1+(-240)q+O(q^2))^{\frac{p^b(m+1+3i^m)}{4}}f  \\
&\equiv (q^{-p^b}+744+O(q^{p^b}))(1-252(1+i^m)q^{p^b}+O(q^{2p^b}))\\ & \ \ \ (1-60(m+1+3i^m)q^{p^b}+O(q^{2p^b}))\sum_{n\gg-\infty}^{\infty}a_f(n)q^n\\
&\equiv (q^{-p^b}+432-60m-432i^m+O(q^{p^b}))\sum_{n\gg-\infty}^{\infty}a_f(n)q^n\ \ ({\rm{mod}}\ p).
\end{array}
\end{eqnarray*}
We already know $\ord_{\infty}(f)>-p^a\geq -p^b$, then we know the constant term $c_{m, p}$ of $g_m^{p^b}f$ must satisfy the following congruence
\begin{equation*}
c_{m, p} \equiv a_f(p^b)+(432-60m-432i^m)a_f(0)\ \ ({\rm{mod}}\ p).
\end{equation*}
As $c_{m, p}$ is known to be zero modulo $p$ and for $m\in A$,
\begin{equation*}
\frac{2m}{B_m}=432-60m-432i^m,
\end{equation*}
we get the conclusion.

For the case $(2)$, as we have $2-k=r(p-1)+sp^t$ and $sp^{t-u}\neq 2$, we can find $c_1, c_2\in\mathbb{Z}_{\geq 0}$ such that $4c_1+6c_2=sp^{t-u}$. Then we have
\begin{equation*}
(E_4^{c_1}E_6^{c_2})^{p^u}E_{p-1}^r f\in M_2^!.
\end{equation*}
Hence we have that the constant term of $(E_4^{c_1}E_6^{c_2})^{p^u}E_{p-1}^r f$ is zero.
As
\begin{equation*}
(E_4^{c_1}E_6^{c_2})^{p^u}E_{p-1}^r f\equiv (1+O(q^{p^u}))f\ \ ({\rm{mod}}\ p)
\end{equation*}
and $\ord_{\infty}(f)>-p^u$, we know $a_f(0)\equiv 0\ ({\rm{mod}}\ p)$.
To prove the case of $a_f(p^v)$, for $u\leq v\leq t$, we may find $c_1', c_2'\in\mathbb{Z}_{\geq 0}$ such that $4c_1'+6c_2'=sp^{t-v}$. Then we have
\begin{equation*}
j^{p^v}(E_4^{c_1'}E_6^{c_2'})^{p^v}E_{p-1}^r f\in M_2^!.
\end{equation*}
Hence the constant term of $j^{p^v}(E_4^{c_1'}E_6^{c_2'})^{p^v}E_{p-1}^r f$ is zero. As
\begin{equation*}
(jE_4^{c_1'}E_6^{c_2'})^{p^v}E_{p-1}^r f\equiv (q^{-p^v}+744+240c_1'-504c_2'+O(q^{p^v}))f\ \ ({\rm{mod}}\ p)
\end{equation*}
and $\ord_{\infty}(f)>-p^u\geq -p^v$, we get
\begin{equation*}
a_f(p^v)+(744+240c_1'-504c_2')a_f(0)\equiv 0\ \ ({\rm{mod}}\ p).
\end{equation*}
Knowing that $a_f(0)\equiv 0\ ({\rm{mod}}\ p)$, we get the conclusion.
\end{proof}

\section{Proof of Theorem~\ref{MainThm}} By Theorem~\ref{OldTheorem}, $p=2$ and $3$ are non-ordinary for every normalized Hecke eigenform on $\SL_2(\Z)$.
Therefore, we may assume that $S$ consists only of primes $p\geq 5$.

For the given finite set of primes $S$, let $k_S(j,m):=j\prod_{p\in S} (p-1)+m$, where $j$ is an arbitrary non-negative integer, $m\in A$. For each $j$ and $m$ let  $b_S(j,m)$ be any integer for which $$k_S(j,m)-2<(m-2)p^{b_S(j,m)}$$ for all $p\in S$. Let $f=\sum_{n=1}^{\infty}a_f(n)q^n$ be any Hecke eigenform of weight $k_S(j,m)$. By Theorem~\ref{TechnicalTheorem} (1), since $a_f(0)=0$, we have
\begin{equation*}
a_f(p^{b_S(j,m)})\equiv 0\ \ ({\rm{mod}}\ p)
\end{equation*}
for all $p\in S$.
Applying Proposition ~\ref{proposition1}, we know that $f$ is non-ordinary for each $p\in S$. As $j$ can be chosen freely, we get the conclusion.

\section{Examples}

\begin{example}
Let $S=\{2, 3, 5, 7, 11, 13, 17, 19\}$. In the following table we list some of the weights $k$ for which Hecke eigenforms are non-ordinary at each prime $p$.
\begin{center}
    \begin{tabular} {| l | l l l l l l l l l l l l l l l l|}
    \hline
     $p$ &  \multicolumn{16}{|c|}{$12\leq k\leq 42$ such that all Hecke eigenforms $S_k$ are non-ordinary at $p$}\\ \hline
    \centering{2} & 12 & 14 & 16 & 18 & 20 & 22 & 24 & 26 & 28 & 30 & 32 & 34 & 36 & 38 & 40 & 42 \\ \hline
    3 & 12 & 14 & 16 & 18 & 20 & 22 & 24 & 26 & 28 & 30 & 32 & 34 & 36 & 38 & 40 & 42 \\ \hline
    5 & 12 & 14 & 16 & 18 & 20 & 22 & 24 & 26 & 28 & 30 & 32 & 34 & 36 & 38 & 40 & 42 \\ \hline
    7 & 12 & 14 & 16 & 18 & 20 & 22 & 24 & 26 & 28 & 30 & 32 & 34 & 36 & 38 & 40 & 42 \\ \hline
    11 &   & 14 & 16 & 18 & 20 &    & 24 & 26 & 28 & 30 &    & 34 & 36 & 38 & 40 &    \\ \hline
    13 &   & 14 & 16 & 18 & 20 & 22 &    & 26 & 28 & 30 & 32 & 34 & & 38 & 40 & 42 \\ \hline
    17 &   & 14 &    &    & 20 & 22 & 24 & 26 &    & 30 &    &    & 36 & 38 & 40 & 42 \\ \hline
    19 &   & 14 &    &    &    & 22 & 24 & 26 & 28 &    & 32 &    & &  & 40 & 42\\ \hline
    \end{tabular}
\end{center}
In particular, we consider the case $k=26$ and check its non-ordinariness.
We have the following $q$-expansion of the normalized weight $26$ Hecke eigenform $f_{26}=\Delta E_6 E_4^2$,
\begin{eqnarray*}
\begin{array}{ll}
f_{26}(z)=
& q-48q^2-195804q^3-33552128q^4-741989850q^5+9398592 q^6+39080597192 q^7\\
& +3221114880 q^8-808949403027q^9+35615512800 q^{10}+8419515299052 q^{11}\\
& +6569640870912 q^{12}-81651045335314q^{13}-1875868665216q^{14}\\
&+145284580589400 q^{15}+1125667983917056 q^{16}-2519900028948078q^{17}\\
& +38829571345296 q^{18}-6082056370308940q^{19}+O(q^{20}).
\end{array}
\end{eqnarray*}
We can easily check that $a_{f_{26}}(p)\equiv 0$ (mod $p$) for each $p\in S$. Of course we can also choose weights $k$ of the form $k=26+720j$, for every $j\in \N$. Note that $720=[5-1, 7-1, 11-1, 13-1, 17-1, 19-1]$.
\end{example}


\begin{thebibliography}{GKZ}

\bibitem{ChoieKohnenOno}  Y. Choie, W. Kohnen, and K. Ono, \emph{Linear relations between modular form coefficients
and non-ordinary primes}, Bull. London Math. Soc. \textbf{37} (3), (2005), pages 335-341.

\bibitem{Gouvea} F. Gouv\^ea, \emph{Non-ordinary primes: A story}, Exp. Math. \textbf{6} (1997), pages 195-205.


\bibitem{Hatada} K. Hatada, \emph{Eigenvalues of Hecke operators on $\SL_2(\Z)$}, Math. Ann.
\textbf{239} (1979), pages 75-96.

\bibitem{Hida1} H. Hida, \emph{Galois representations into $GL_2(\Z_p[[x]])$ attached to ordinary cusp forms},
Invent. Math. \textbf{85} (1986), pages 545-613.

\bibitem{Hida2} H. Hida, \emph{Iwasawa modules attached to congruences of cusp forms},
Ann. Sci. \'Ecole Norm. Sup. (4) \textbf{19} (1986), pages 231-273.

\bibitem{Hida3} H. Hida, \emph{Theory of $p$-adic Hecke algebras and Galois representations}, Sugaku
Exp. \textbf{2} (1989), pages 75-102.


\bibitem{OnoCBMS} K. Ono, \emph{The web of modularity: Arithmetic of the coefficients of modular forms and $q$-series},
NSF-CBMS Conference Monograph, Amer. Math. Soc., Providence, 2004.

\end{thebibliography}
\end{document}